\DeclareMathOperator{\supp}{supp}
\DeclareMathOperator{\ptt}{Part}
\numberwithin{equation}{section}
\newcommand{\ep}{{\epsilon}}
\newtheorem{theorem}{Theorem}[section]
\newtheorem{proposition}[theorem]{Proposition}
\newtheorem{lemma}[theorem]{Lemma}
\newtheorem{corollary}[theorem]{Corollary}
\theoremstyle{definition}
\newtheorem{definition}[theorem]{Definition}
\theoremstyle{remark}
\newcommand{\R}{\mathbb{R}}
\newcommand{\C}{\mathbb{C}}
\renewcommand{\hat}{\widehat}
\newcommand{\abs}[1]{\left\vert#1\right\vert}
\newcommand{\scriptN}{\mathcal{N}}
\DeclarePairedDelimiter{\norm}{\lVert}{\rVert}
\def\XXint#1#2#3{{\setbox0=\hbox{$#1{#2#3}{\int}$ }
		\vcenter{\hbox{$#2#3$ }}\kern-.6\wd0}}
\begin{document}
\title{$\ell^2$ Decoupling in $\R^2$ for curves with Vanishing Curvature}

\author{Chandan Biswas}
\address{Mathematical Sciences Department, University of Cincinnati, Cincinnati, OH 45221, USA}
\email{chandan.biswas@uc.edu}

\author{Maxim Gilula}
\address{Department of Mathematics, Michigan State University, East Lansing, MI 48824, USA}
\email{gilulama@math.msu.edu}

\author{Linhan Li}
\address{Department of Mathematics, Brown University, Providence, RI 02912, USA}
\email{linhan$\_$li@brown.edu}

\author{Jeremy Schwend}
\address{Department of Mathematics, University of Wisconsin, Madison, WI 53706, USA}
\email{jschwend@math.wisc.edu}

\author{Yakun Xi}
\address{Department of Mathematics, University of Rochester, Rochester, NY 14620, USA}
\email{yxi4@math.rochester.edu}

\thanks{This material is based upon work supported by the National Science Foundation under Grant No. 1641020. We would also like to thank the AMS and everyone who helped make possible the Oscillatory Integrals Mathematics Research Community held in June of 2018. Schwend was also supported by NSF DMS-1653264 and DMS-1147523.}

\begin{abstract}
	We expand the class of curves $(\varphi_1(t),\varphi_2(t)),\ t\in[0,1]$ for which the $\ell^2$ decoupling conjecture holds for $2\leq p\leq 6$. Our class of curves includes all real-analytic regular curves with isolated points of vanishing curvature and all curves of the form $(t,t^{1+\nu})$ for $\nu\in (0,\infty)$.
\end{abstract}

\maketitle

\section{Introduction}

Let $g$ be a locally integrable function defined on a measurable set $Q$ in $\R$, and define the $(\varphi_1,\varphi_2)$ extension operator by $$E^{\varphi_1,\varphi_2}_Qg(x_1,x_2)=\int_Q e((x_1,x_2)\cdot(\varphi_1(t),\varphi_2(t)) g(t)dt,$$ where $e(z)=e^{2\pi i z}.$ We will just write $E_Q g(x_1,x_2)$ from now on because $\varphi_1$ and $\varphi_2$ will be clear from the context.

This extension operator with $Q=[0,1]$ and the curve $(\varphi_1, \varphi_2)$ with vanishing or infinite curvature is the main object of study in this paper. In particular, we prove an $\ell^2$ decoupling inequality of the form

\begin{equation}
\|E_{[0,1]}g\|_{L^p(\mathbb R^2)}\le C_\epsilon \delta^{-\epsilon}\left( \sum_{\Delta\in \ptt_{\delta^{1/2}}([0,1])}\|E_\Delta g\|_{L^p(\mathbb R^2)}^2\right)^{1/2}\label{main1}
\end{equation}
for curves with curvature that vanishes, or goes to infinity at finite order. Above, $\ptt_{\delta^{1/2}}([0,1])$ denotes a partition of the unit interval into subintervals of size $\delta^{1/2}$, and $2\le p\le 6.$

The notorious difficulty of this problem for various extension operators has led to few developments in the 2000's since Wolff \cite{Wo00}, but just a few years ago in the revolutionary paper \cite{bourgain2015proof}, Bourgain-Demeter proved inequality \eqref{main1} for  the curve $(t,t^2)$ in $\R^2$, as well as generalizations to all curves and hypersurfaces with non-vanishing curvature. This $\ell^2$ decoupling inequality of Bourgain-Demeter had many powerful applications; for example, the proof of the main conjecture in Vinogradov's Mean Value Theorem   \cite{bourgain2016proof}, an 80 year old problem in number theory counting integer solutions to a system of equations of the form $x_1^k+\cdots + x_s^k=y_1^k+\cdots+y_s^k$, used $\ell^2$ decoupling as a key tool. One should consult Pierce's exposition \cite{pierce2017vinogradov} on the Vinogradov Mean Value Theorem for an almost complete list of recent advances and references regarding Bourgain-Demeter's result.

The $\ell^2$ decoupling theory of Bourgain-Demeter helped make significant progress on, and even close, multiple long standing open problems in harmonic analysis and number theory, and has the potential of even further significant applications. This prompts one to extend the decoupling inequalities to include a bigger class of curves such as those  with vanishing curvature, which we study in this paper. A model case to keep in mind is the  curve $(\varphi_1(t), \varphi_2(t))=(t,t^{1+\nu}),\ \nu>0$, where the curvature is 0 or $\infty$ at the origin.  Our result for this model case is the following.
\begin{theorem}\label{model}
	Given the curve  $(t,t^{1+\nu})$ for fixed $\nu>0$, for all $2\leq p \leq 6$, all $\epsilon>0$, and all $g:[0,1]\to\C$ there is a constant $C_\epsilon$ such that
\begin{equation}
\|E_{[0,1]}g\|_{L^p(\mathbb R^2)}\le C_\epsilon \delta^{-\epsilon}\left( \sum_{\Delta\in \ptt_{\delta^{1/2}}([0,1])}\|E_\Delta g\|_{L^p(\mathbb R^2)}^2\right)^{1/2}.
\end{equation}

\end{theorem}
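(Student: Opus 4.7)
The proof exploits the exact scaling symmetry of $(t,t^{1+\nu})$, decomposing $[0,1]$ into dyadic pieces on which the curve is (after rescaling) a piece of a curve with non-vanishing curvature, and then applying the Bourgain--Demeter decoupling of \cite{bourgain2015proof}.

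\textbf{Dyadic decomposition.} I would write $[0,1] = I_0 \cup \bigcup_{k=1}^{K} I_k$, where $I_k=[2^{-(k+1)},2^{-k}]$ for $k=1,\ldots,K$ and $I_0=[0,2^{-K}]$, with $K$ chosen so that $2^{-K} \approx \delta^{1/2}$. In particular, $I_0$ is (essentially) a single element of $\ptt_{\delta^{1/2}}([0,1])$ and $K \lesssim \log(1/\delta)$. By the triangle inequality followed by Cauchy--Schwarz,
\begin{equation*}
\|E_{[0,1]}g\|_{L^p(\R^2)} \le \sum_{k=0}^{K}\|E_{I_k}g\|_{L^p(\R^2)} \lesssim (\log(1/\delta))^{1/2}\Bigl(\sum_{k=0}^{K}\|E_{I_k}g\|_{L^p(\R^2)}^2\Bigr)^{1/2}.
\end{equation*}
The $\log$ factor is of size $\delta^{-o(1)}$ and so harmless, which conveniently sidesteps the need for any genuine $\ell^2$-decoupling between different dyadic pieces. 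The central piece $I_0$ contributes one term of the right-hand side of the desired inequality, so it only remains to produce an $\ell^2$-decoupling of each $\|E_{I_k}g\|_{L^p}^2$ into $\|E_\Delta g\|_{L^p}^2$ over $\Delta \in \ptt_{\delta^{1/2}}(I_k)$.

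\textbf{Rescaling on a single dyadic piece.} Fix $k\ge 1$ and set $s=s_k=2^{-(k+1)}$. Substituting $t=sv$ in $E_{I_k}g(x)$,
\begin{equation*}
E_{I_k}g(x_1,x_2) = s\int_{1}^{2} e\bigl(sx_1\, v + s^{1+\nu}x_2\, v^{1+\nu}\bigr)\,g(sv)\,dv = E^{(v,v^{1+\nu})}_{[1,2]}\tilde g\,(sx_1,\,s^{1+\nu}x_2),
\end{equation*}
where $\tilde g(v)=s\,g(sv)$. The linear change of variables $y=(sx_1,s^{1+\nu}x_2)$ has Jacobian $s^{2+\nu}$, and the curve $v\mapsto(v,v^{1+\nu})$ has curvature bounded above and below on $[1,2]$. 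Consequently, the Bourgain--Demeter $\ell^2$ decoupling theorem \cite{bourgain2015proof} applies to $E^{(v,v^{1+\nu})}_{[1,2]}\tilde g$ at the rescaled parameter $\delta' := \delta/s^2 \le 1$, with partition scale $(\delta')^{1/2}=\delta^{1/2}/s$. The intervals of this partition of $[1,2]$ correspond, under $t=sv$, exactly to the elements of $\ptt_{\delta^{1/2}}(I_k)$, and the change-of-variables factors cancel on both sides. One obtains
\begin{equation*}
\|E_{I_k}g\|_{L^p(\R^2)} \lesssim_\epsilon (\delta/s^2)^{-\epsilon}\Bigl(\sum_{\Delta\in\ptt_{\delta^{1/2}}(I_k)}\|E_\Delta g\|_{L^p(\R^2)}^2\Bigr)^{1/2} \le \delta^{-\epsilon}\Bigl(\sum_{\Delta\in\ptt_{\delta^{1/2}}(I_k)}\|E_\Delta g\|_{L^p(\R^2)}^2\Bigr)^{1/2},
\end{equation*}
since $s\le 1$.

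\textbf{Combining the pieces.} Substituting this bound into the Cauchy--Schwarz step above, reindexing $\bigcup_{k=0}^{K}\ptt_{\delta^{1/2}}(I_k) = \ptt_{\delta^{1/2}}([0,1])$, and absorbing the $(\log(1/\delta))^{1/2}$ factor into $\delta^{-\epsilon}$ (at the cost of replacing $\epsilon$ by $2\epsilon$) yields the desired inequality for any $\epsilon>0$ and $2\le p\le 6$. The only ``obstacles'' worth flagging are mild: verifying that the affine rescaling preserves the decoupling constant up to the harmless factor $s^{2\epsilon}\le 1$, and checking that the piece $I_0=[0,\delta^{1/2}]$ can simply be absorbed as one term on the right-hand side. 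Because $(t,t^{1+\nu})$ enjoys the exact scaling $t\mapsto st$ intertwining the two coordinates by powers of $s$, no curvature-quantified refinement of Bourgain--Demeter is needed here; this scaling symmetry is what collapses the proof to the above two-step argument and is precisely what is lacking for the general curves treated elsewhere in the paper.
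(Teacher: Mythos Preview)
Your argument is correct, and it is genuinely different from the paper's. Both proofs begin with a dyadic decomposition of $[0,1]$ and a logarithmic loss via Cauchy--Schwarz, but the treatment of a single dyadic piece $[s,2s]$ diverges. You use the \emph{full} two-parameter scaling $(t,t^{1+\nu})\mapsto (sv,s^{1+\nu}v^{1+\nu})$, which maps the curve to the fixed curve $(v,v^{1+\nu})$ on $[1,2]$ with curvature $\sim_\nu 1$; a single application of Bourgain--Demeter at scale $\delta'=\delta/s^2$ then finishes the job, with the Jacobian factor $s^{-(2+\nu)/p}$ cancelling on both sides. The paper instead rescales only the \emph{vertical} coordinate, considers $\gamma_a(t)=(t,a^{1-\nu}t^{1+\nu})$ on $[a,2a]$, Taylor-expands to approximate $\gamma_a$ by a parabola on intervals of length $\delta^{1/2-\sigma}$, and runs the Pramanik--Seeger/Bourgain--Demeter bootstrap $K_p(\delta)\le K_p^{par}(\delta)\,K_p(\delta^{1-\sigma})$ to close. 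Your route is shorter and avoids the iteration entirely, but it depends crucially on the exact power-law self-similarity of $(t,t^{1+\nu})$; the paper's vertical-rescaling-plus-iteration approach is what survives for the general curves of Theorem~\ref{main}, where no such global scaling exists. One small remark: you invoke the $L^p(\R^2)$ form of Bourgain--Demeter for a $C^2$ curve with curvature $\sim 1$; this is indeed available (it follows from the weighted-ball version by tiling and Minkowski, or from Section~7 of \cite{bourgain2015proof}), but it is worth noting that this step already hides the same iteration the paper writes out explicitly.
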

Next, for the more general version with curve $(t, \varphi(t))$:

\vspace{.5pc}

First, if $\varphi''$ is in $C^{0,\alpha}([0,1])$, some $\alpha>0$, and bounded away from $0$ and $\infty$, the decoupling result automatically follows from the work of section 7 of Bourgain-Demeter \cite{bourgain2015proof}. To make this result more general, at points where one of these is not satisfied we will need two things, as mirrored in the assumptions of Theorem \ref{main} and Proposition \ref{mainprop}:

\vspace{.5pc}
\begin{flushleft}
(1) Control on the rate of vanishing (or blowup) of $\varphi''$.
\\
(2) Control on the nearby $C^{0,\alpha}$ norm, for use in bounding error with Taylor's theorem.
\end{flushleft}

\begin{definition}Let $\varphi''$ be defined near $z\in \R$. Then $r^{+}_{2,S}(z)$, the \textit{supremal right order of vanishing\footnote{We are abusing the word \lq\lq vanishing\rq\rq a bit here since we also include the $\infty$ case, for which the \lq\lq vanishing\rq\rq order is negative.} of $\varphi''$ at $z$}, and $r^{+}_{2,I}(z)$, the \textit{infimal right order of vanishing of $\varphi''$ at $z$}, will be defined as
$$
r^{+}_{2,S}(z):=\sup\left\lbrace s:\limsup_{t\rightarrow z^{+}}\frac{\varphi''(t)}{t^s}=0\right\rbrace,
$$
$$
r^{+}_{2,I}(z):=\sup\left\lbrace s:\liminf_{t\rightarrow z^{+}}\frac{\varphi''(t)}{t^s}=0\right\rbrace,
$$
and similarly for the left order of vanishings. Furthermore, define $r_{3,S}$ similarly for $\varphi'''$. If $\varphi''$ is in H\"older space $C^{0,\alpha}(\Omega)$, $0<\alpha<1$, then define
$$
|\varphi''|_{C^{0,\alpha}(\Omega)}:=\sup_{x\neq y\in \Omega}\frac{|\varphi''(x)-\varphi''(y)|}{|x-y|^\alpha}.
$$
Finally, the \textit{supremal right $C^{0,\alpha}$-order of vanishing of $\varphi''$ at $z$} will be defined as
$$
\overline{r}^{+}_{2+\alpha,S}(z):=\sup\left\lbrace s:\limsup_{t\rightarrow 0^+}\frac{|\varphi''|_{C^{0,\alpha}(\{z\}+[t,2t])}}{t^s}=0\right\rbrace,
$$
and likewise for the left order of vanishing.
\end{definition}

In Proposition \ref{mainprop}, we will reduce Theorem \ref{main} to the case where $z=0$, and $\varphi$ is defined on an interval $(0,c]$ for some $c>0$. As such we define the orders of vanishing at $0$ as:
$$
r_{2,S}:=r_{2,S}^+(0), \quad r_{2,I}:=r_{2,I}^+(0), \quad \overline{r}_{2+\alpha,S}:=\overline{r}_{2+\alpha,S}^+(0).
$$
As a remark, if we extend the definition of $\overline{r}_{2+\alpha,S}$ to all $\alpha\in [0,1]$, then
$$
\overline{r}_{2+\alpha,S}+\alpha
$$ is nonincreasing in $\alpha$, and
$$
 \overline{r}_{2+1,S}\geq r_{3,S},
$$
when defined, by definition of $|\varphi''|_{C^{0,\alpha}(\Omega)}$ and the mean value theorem, respectively. Hence, for the purpose of assumption (2) in Theorem \ref{main} and Prop \ref{mainprop}, the following statements are ordered from strongest to weakest (with assumption (2) using (iii)):
\\
(i) $r_{3,S}=r_{2,S}-1$
\\
(ii) $\overline{r}_{2+\beta,S}=r_{2,S}-\beta$ for some $\beta\in (0,1)$
\\
(iii) $\overline{r}_{2+\beta,S}\geq r_{2,S}-\beta+o(\beta)$, for $\beta>0$ small.
\\
\\
Given the unit ball $B = B(0,1)$ in $\R^2$, denote by $\omega_B:\R^2\to\R$ the weight function
$$
\omega_B(x)=\Big(1+{\|x\|}\Big)^{-200}.
$$
For each rectangle $R$ in $\R^2$ with sides parallel to the coordinate axes, let $\omega_R$ be the adaptation of $\omega_B$ to $R$. To be more precise, if $R$ is the rectangle centered at $x_0$ of size $a\times b$ with its sides parallel to the coordinate axes, we define
\[\omega_R(x)=\omega_B(T_R(x-x_0)),\]
where $T_R$ is the linear transformation that maps $(a,0)$ to $(1,0)$ and  $(0,b)$ to $(0,1)$.

\begin{theorem}\label{main}Let $Z$ be a finite subset of $[0,1]$.
	Let $\varphi''$ be never zero and locally H\"older continuous in $[0,1]\setminus Z$ with exponent $\alpha>0$. Furthermore, assume that for each $z\in Z$, $\varphi$ satisfies the following conditions:
	\begin{enumerate}
		\item $r^+_{2,S}(z)=r^+_{2,I}(z)\in (-1,\infty)$ and $r^-_{2,S}(z)=r^-_{2,I}(z)\in (-1,\infty)$
		\item $\overline{r}^+_{2+\beta,S}(z)\geq r^+_{2,S}(z)-\beta+o(\beta)$, and likewise for $r^-$, with $\beta\in (0,\alpha]$.
	\end{enumerate}
	Then, defining the maximal order of vanishing $r$ as
	$$
	r:=\max_{z\in Z}\{r^{+}_{2,S}+2,r^{-}_{2,S}+2,2\},
	$$
	we have, for all $2\le p\le 6$, all $\epsilon>0$, and all $g\in L^1([0,1])$,
	$$
	\|E_{[0,1]}g\|_{L^p(\omega_{R_{\delta,r}})}\le C_\epsilon \delta^{-\epsilon}\left(\sum_{\Delta\in \ptt_{\delta^{1/2}}([0,1])}\|E_\Delta  g\|^2_{L^p(\omega_{R_{\delta,r}})}\right)^{1/2}
	$$
	 where $R_{\delta,r}$ is a rectangle with sides parallel to the coordinate axes of size $\delta^{-1}\times\delta^{-r/2}$, and
	$$
	E_\Delta  g(x_1,x_2):=\int_\Delta g(t)e(t x_1+\varphi(t)x_2)dt.
	$$
\end{theorem}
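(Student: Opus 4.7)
The strategy is to localize the problem to the neighborhood of a single degenerate point and then resolve that neighborhood by a dyadic rescaling which, on each dyadic piece, reduces matters to Bourgain--Demeter's $\ell^2$ decoupling for non-degenerate H\"older curves.

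First, since $Z$ is finite, I would introduce a smooth partition of unity on $[0,1]$ subordinate to small intervals around each $z\in Z$ together with the complementary ``good'' region on which $\varphi''$ is bounded away from $0$ and $\infty$ and H\"older continuous. On the good region, Section~7 of \cite{bourgain2015proof} directly yields the inequality with the weight $\omega_B$ of the unit ball, which is pointwise dominated by $\omega_{R_{\delta,r}}$ because $R_{\delta,r}\supset B$. Near each $z\in Z$, after translation and possibly reflection, I may assume $z=0$ and work with $\varphi$ on $(0,c]$, setting $r_{2,S}:=r^{+}_{2,S}(0)$ and analogously for $\overline{r}_{2+\beta,S}$; this is precisely the setup of Proposition~\ref{mainprop}.

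Next, dyadically decompose $(0,c]=\bigsqcup_{k\ge k_0}I_k$ with $I_k=[2^{-k-1}c,2^{-k}c]$. Scales with $|I_k|<\delta^{1/2}$ sit inside a single $\Delta\in\ptt_{\delta^{1/2}}$ and contribute trivially. For each coarse scale $|I_k|\ge\delta^{1/2}$, substitute $t=2^{-k-1}c(1+s)$, $s\in[0,1]$, and write
$$
\varphi(t)=A_k+B_k(t-2^{-k-1}c)+|I_k|^{r_{2,S}+2}\psi_k(s).
$$
The affine part absorbs into an affine transformation $T_k$ of $\R^2$, so $|E_{I_k}g(x)|=|I_k|\,|F_k g_k(T_kx)|$, where $F_k$ is the extension operator for the non-degenerate graph $(s,\psi_k(s))$ over $[0,1]$ and $g_k(s)=g(2^{-k-1}c(1+s))$. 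I then apply Bourgain--Demeter's flat $\ell^2$ decoupling to $F_k$ at parameter $\delta_k^{1/2}=\delta^{1/2}/|I_k|\le1$, so that the decoupled pieces correspond, under $T_k^{-1}$, to the restriction of $\ptt_{\delta^{1/2}}$ to $I_k$. Tracking $T_k^{-1}$, the BD unit-ball weight pulls back to a weight adapted to a rectangle of extents $\lesssim\delta^{-1}\times\delta^{-r/2}$ uniformly in $k$, by the choice $r=\max\{r_{2,S}+2,2\}$ and $|I_k|\ge\delta^{1/2}$; hence $\omega_{R_{\delta,r}}$ dominates every pulled-back weight. Summing over $k$ via the triangle inequality and Cauchy--Schwarz introduces only a factor of $(\log\delta^{-1})^{1/2}$, which is absorbed into $\delta^{-\epsilon}$.

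The main technical obstacle is a uniform H\"older bound on $\psi_k''$: the scaling computation
$$
|\psi_k''|_{C^{0,\beta}([0,1])}=|I_k|^{\beta-r_{2,S}}\,|\varphi''|_{C^{0,\beta}(I_k)}
$$
together with hypothesis~(2), which yields $|\varphi''|_{C^{0,\beta}(I_k)}\lesssim|I_k|^{r_{2,S}-\beta+o(\beta)}$, shows that the rescaled seminorm is at worst $|I_k|^{o(\beta)}\le\delta^{-|o(\beta)|}$. Choosing $\beta\in(0,\alpha]$ small relative to $\epsilon$ absorbs this factor into $\delta^{-\epsilon}$. A secondary bookkeeping issue is controlling the shear $B_k=\varphi'(2^{-k-1}c)-\varphi'(0)$, for which $|B_k|\lesssim|I_k|^{r_{2,S}+1}$ follows by integrating $\varphi''$ using $r_{2,S}=r_{2,I}$; this estimate is exactly what is needed so that the pulled-back rectangle genuinely fits inside $R_{\delta,r}$ in both the $x_1$- and $x_2$-directions.
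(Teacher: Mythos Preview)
Your reduction---handling the good region directly, localizing to one side of a single $z\in Z$, and dyadically decomposing $(0,c]$---matches the paper's reduction to Proposition~\ref{mainprop}, and your rectangle and shear bookkeeping under $T_k$ is correct. The divergence is in how each dyadic piece $[a,2a]$ is handled.

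The paper does \emph{not} rescale $I_k$ to $[0,1]$ and cite Bourgain--Demeter for non-degenerate curves as a black box. Instead it normalizes only the vertical variable, dividing $\varphi$ by $\varphi''_a:=\min_{[a,2a]}\varphi''$, shows via Taylor's theorem with H\"older remainder that the normalized curve $\gamma_a$ lies within $\delta$ of an explicit parabola on every subinterval of length $c_\epsilon\,\delta^{1/2-\epsilon\beta/4}$, and then runs the iteration
\[
K_p(\delta)\ \le\ K_p^{par}(\delta)\,K_p\bigl(c_\epsilon^2\,\delta^{1-\epsilon\beta/2}\bigr),
\]
bootstrapping the decoupling constant for $\gamma_a$ against itself and invoking only the explicit parabola constant $K_p^{par}$. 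The iteration terminates after $O_\epsilon(\log\log\delta^{-1})$ steps, once the coarse scale reaches~$a$.

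Your direct route has a gap at the step ``apply Bourgain--Demeter's flat decoupling to $F_k$.'' After your rescaling, hypothesis~(1) only yields
\[
c_{\epsilon'}\,|I_k|^{\epsilon'}\ \le\ \psi_k''(s)\ \le\ C_{\epsilon'}\,|I_k|^{-\epsilon'}
\]
for each $\epsilon'>0$, and hypothesis~(2) only yields $|\psi_k''|_{C^{0,\beta}}\lesssim |I_k|^{o(\beta)-\epsilon'}$; since $|I_k|$ ranges down to $\delta^{1/2}$, none of these are uniform in $k$. The lemma opening Section~3 (equivalently \cite{bourgain2015proof}, \S7) delivers a constant $C_\epsilon$ that depends implicitly on the lower bound of $\psi''$ and on $|\psi''|_{C^{0,\beta}}$, so you cannot cite it once and absorb the outcome into $\delta^{-\epsilon}$. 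Your sentence ``choosing $\beta$ small relative to $\epsilon$ absorbs this factor'' presumes that the Bourgain--Demeter constant is at worst polynomial in those curve parameters; establishing that is precisely the content of the iteration the paper writes out. Once you open that black box, the two arguments coincide.
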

A simple corollary is the following:
\begin{corollary}
	If $\varphi$ is analytic on $[0,1]$ with curvature not identically zero, then for all $2\leq p \leq 6$ we have
	$$
	\|E_{[0,1]}g\|_{L^p(\omega_{R_{\delta,r}})}\le C_\epsilon \delta^{-\epsilon}\left(\sum_{\Delta\in \ptt_{\delta^{1/2}}([0,1])}\|E_\Delta  g\|^2_{L^p(\omega_{R_{\delta,r}})}\right)^{1/2}
	$$
	for all $g\in L^1([0,1])$, where $R_{\delta,r}$ is a rectangle with sides parallel to the coordinate axes of size $\delta^{-1}\times\delta^{-r/2}$, with $r-2$ being the maximum order of vanishing of $\varphi''$ over the whole curve, and
	$$
	E_\Delta  g(x_1,x_2):=\int_\Delta g(t)e(t x_1+\varphi(t)x_2)dt.
	$$

	\end{corollary}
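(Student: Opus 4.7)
The plan is to verify that the hypotheses of Theorem \ref{main} are satisfied for any analytic $\varphi$ with curvature not identically zero, so that the corollary follows immediately.

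Since $\varphi$ is real-analytic on the compact interval $[0,1]$, the second derivative $\varphi''$ is also real-analytic. The assumption that the curvature is not identically zero ensures $\varphi'' \not\equiv 0$, so by the identity theorem for real-analytic functions, the zero set
\[
Z := \{z \in [0,1] : \varphi''(z) = 0\}
\]
is finite. Away from $Z$, analyticity of $\varphi''$ implies that $\varphi''$ is never zero and locally $C^\infty$, hence locally $C^{0,\alpha}$ for any $\alpha \in (0,1)$; this gives the baseline hypotheses of Theorem \ref{main}.

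Next, I would verify the two numbered conditions at each $z \in Z$. Because $\varphi''$ is analytic and not identically zero, near each $z \in Z$ we have a factorization
\[
\varphi''(t) = (t-z)^{k(z)} h_z(t),
\]
where $k(z)$ is a positive integer and $h_z$ is analytic with $h_z(z) \neq 0$. This directly yields
\[
r^+_{2,S}(z) = r^+_{2,I}(z) = k(z), \qquad r^-_{2,S}(z) = r^-_{2,I}(z) = k(z),
\]
all lying in $[1,\infty) \subset (-1,\infty)$, which is exactly condition (1). Differentiating the factorization gives $\varphi'''(t) = k(z)(t-z)^{k(z)-1} h_z(t) + (t-z)^{k(z)} h_z'(t)$, so $r^{\pm}_{3,S}(z) = k(z)-1 = r^{\pm}_{2,S}(z) - 1$. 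This is precisely condition (i) in the discussion preceding Theorem \ref{main}, which is the strongest of the three conditions listed there and therefore implies condition (iii), namely condition (2) of Theorem \ref{main}.

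With both hypotheses verified at every $z \in Z$ (and $\varphi'' \in C^{0,\alpha}_{\mathrm{loc}}([0,1]\setminus Z)$ for any $\alpha \in (0,1)$), Theorem \ref{main} applies. The maximum order of vanishing of $\varphi''$ over $[0,1]$ is $\max_{z \in Z} k(z)$, which (together with the floor of $0$ in the definition of $r$) exactly reproduces the parameter $r$ of the corollary, i.e.\ $r - 2 = \max_{z\in Z} k(z)$ (interpreted as $0$ if $Z = \emptyset$, recovering the classical non-vanishing curvature case via \cite{bourgain2015proof}). There is no real obstacle here: the entire content of the corollary is the observation that analyticity forces zeros of $\varphi''$ to be isolated and of integer order, which makes conditions (1) and (2) automatic.
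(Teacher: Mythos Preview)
Your proposal is correct and matches the paper's implicit argument: the paper offers no separate proof of this corollary, simply labeling it ``a simple corollary'' of Theorem \ref{main}, and your verification that analyticity of $\varphi$ forces $Z$ to be finite and each zero of $\varphi''$ to have integer order (so that conditions (1) and (2) hold, the latter via condition (i)) is exactly the intended reasoning. One small remark: the paper's definition of $r^{+}_{2,S}(z)$ literally has $t^s$ in the denominator rather than $(t-z)^s$, but the reduction to $z=0$ in Proposition \ref{mainprop} makes clear that $(t-z)^s$ is intended, and you have used that interpretation.
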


Our results naturally extend to curves in  general forms.
Let $S$ be a regular curve in $\R^2$ parametrized by $\{(\varphi_1(t),\varphi_2(t)); 0\leq t\leq 1\}$ where $\varphi_i$'s
are $C^\infty$ smooth functions. We assume that the Wronskian of $(\varphi_1',\varphi_2')$ only vanishes at finitely many points, and to finite order. We define, for any subinterval $\Delta$ of $[0,1]$ and for any function $g\in L^1([0,1])$,
$$
E^S_{\Delta}g(x_1,x_2)=\int_\Delta g(t)e(x_1\varphi_1(t)+x_2\varphi_2(t))dt.
$$

\begin{corollary}\label{general} Let $S$ be a regular curve as described above.
For each $\epsilon>0,\,0<\delta\leq 1 $ the following holds:
$$
\|E^S_{[0,1]}g\|_{L^p(\omega_{B_{\delta,r}})}\le C_\epsilon \delta^{-\epsilon}\left(\sum_{\Delta\in \ptt_{\delta^{1/2}}([0,1])}\|E^S_\Delta g\|^2_{L^p(\omega_{B_{\delta,r}})}\right)^{1/2},
$$
where $B_{\delta,r}$ is a ball of radius $\delta^{-r/2}$, and where $r-2$ is the maximum order of vanishing of the Wronskian of $(\varphi_1',\varphi_2')$.
\end{corollary}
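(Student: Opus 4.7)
My plan is to deduce Corollary \ref{general} from Theorem \ref{main} by locally reparametrizing the curve $S$ as a graph and then transferring the rectangle-shaped weight produced by Theorem \ref{main} to the ball weight appearing in the corollary.

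The first step is a partition and a graph reduction. By compactness together with the hypothesis that the Wronskian $W=\varphi_1'\varphi_2''-\varphi_1''\varphi_2'$ vanishes at only finitely many points and to finite order, I would partition $[0,1]$ into finitely many closed intervals $I_1,\dots,I_N$ with the properties that on each $I_j$ either $|\varphi_1'|\geq c>0$ or $|\varphi_2'|\geq c>0$, and $I_j$ contains at most one zero of $W$; the number $N$ depends only on the curve. On an interval with $|\varphi_1'|\geq c$ (the other case is handled by swapping the two coordinates, a $\pi/2$-rotation that preserves the isotropic ball weight $\omega_{B_{\delta,r}}$), I set $s=\varphi_1(t)$ and $\psi_j=\varphi_2\circ\varphi_1^{-1}$. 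A short computation gives $\psi_j''(s)=W(\varphi_1^{-1}(s))/(\varphi_1'(\varphi_1^{-1}(s)))^3$, so $\psi_j''$ inherits the order of vanishing of $W$, and $\psi_j$ satisfies the hypotheses of Theorem \ref{main}.

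I would next apply Theorem \ref{main} to $\psi_j$ (after an affine rescaling of $\varphi_1(I_j)$ to $[0,1]$, which is harmless because $\varphi_1'$ is bounded above and below on $I_j$). The $\delta^{1/2}$-partition of $\varphi_1(I_j)$ pulls back to a partition of $I_j$ at scale comparable to $\delta^{1/2}$, and refining or coarsening by an $O(1)$ factor only changes the $\ell^2$-sum on the right by a constant. Undoing the change of variable yields a per-piece estimate
\[
\|E^S_{I_j}g\|_{L^p(\omega_{R_j})}\leq C_\epsilon\delta^{-\epsilon}\Big(\sum_{\Delta\in\ptt_{\delta^{1/2}}(I_j)}\|E^S_\Delta g\|_{L^p(\omega_{R_j})}^2\Big)^{1/2},
\]
with $R_j$ a rectangle of dimensions $\delta^{-1}\times\delta^{-r/2}$ (axis-aligned, or rotated by $\pi/2$ if I swapped coordinates). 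Because $R_j\subset B_{\delta,r}$, the pointwise comparison $\omega_{R_j}\leq\omega_{B_{\delta,r}}$ bounds the right side by its ball-weighted analogue. A triangle inequality in $L^p(\omega_{B_{\delta,r}})$ over $j=1,\dots,N$ then absorbs the factor $N$ into $C_\epsilon$.

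The hard part is the remaining step: upgrading the left side of the per-piece estimate from $\omega_{R_j}$ to $\omega_{B_{\delta,r}}$. A naive cover of $B_{\delta,r}$ by axis-parallel translates of $R_j$ would use $\sim\delta^{1-r/2}$ translates and yield an unacceptable polynomial loss $\delta^{(1-r/2)/p}$. The correct approach exploits the Fourier localization of $E^S_{I_j}g$ on the arc of $S$ over $I_j$: by the uncertainty principle its physical-space mass is essentially confined to the tube dual to that arc, whose geometry matches $R_j$, so the weighted norms $\|E^S_{I_j}g\|_{L^p(\omega_{R_j})}$ and $\|E^S_{I_j}g\|_{L^p(\omega_{B_{\delta,r}})}$ agree up to an absolute constant. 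Making this comparison rigorous---either via a wave-packet decomposition or a direct tail estimate for the oscillatory integral in the presence of vanishing curvature---is the technical heart of the corollary; once it is in place, Theorem \ref{main} closes the argument.
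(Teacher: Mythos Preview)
Your reduction---partition into finitely many pieces, reparametrize each as a graph via $s=\varphi_1(t)$, and feed $\psi=\varphi_2\circ\varphi_1^{-1}$ into Theorem~\ref{main}---is exactly the paper's argument. Your formula $\psi''(s)=W(t)/(\varphi_1'(t))^{3}$ is in fact the correct one (the paper's proof writes a square in the denominator, but since $\varphi_1'$ is bounded away from zero on each piece this does not affect the order of vanishing).

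Where you go wrong is in declaring the passage from the rectangle weight $\omega_{R_j}$ to the ball weight $\omega_{B_{\delta,r}}$ to be ``the technical heart of the corollary'' requiring wave packets or tail estimates. It is nothing of the sort: it is the standard parallel-decoupling (Minkowski) step that the paper already invokes several times, e.g.\ to pass from \eqref{Ea} to \eqref{EaBig} and from \eqref{Eg2} to \eqref{Eg1}. Tile $\R^2$ by translates $R^{(m)}$ of $R_j$, apply Theorem~\ref{main} on each $R^{(m)}$, and use Minkowski's inequality in the form (valid for $p\ge 2$)
\[
\Bigl(\sum_m\Bigl(\sum_\Delta\|E_\Delta g\|_{L^p(\omega_{R^{(m)}})}^2\Bigr)^{p/2}\Bigr)^{2/p}
\le \sum_\Delta\Bigl(\sum_m\|E_\Delta g\|_{L^p(\omega_{R^{(m)}})}^p\Bigr)^{2/p},
\]
together with the pointwise equivalence $\sum_m\omega_{R^{(m)}}\sim\omega_{B_{\delta,r}}$ (suitably weighted by the value of $\omega_{B_{\delta,r}}$ at the center of $R^{(m)}$). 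The number of translates never appears as a multiplicative factor, so there is no polynomial loss. The paper's proof of the corollary is accordingly just a few lines: change variables, check that $\psi$ satisfies the hypotheses of Theorem~\ref{main}, and stop---the tiling step is left implicit as routine.
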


Our paper is organized as follows. In section 2 we present a proof for the model case. In section 3, we prove our main theorem, Theorem \ref{main}, and its corollary.

{\bf Acknowledgment.}{ This problem was suggested to the authors as part of the Mathematics Research Community program in June of 2018. The authors would like to thank  Prof. Philip T. Gressman, Prof. Larry Guth, and Prof. Lillian B. Pierce for organizing the  MRC program, suggesting this problem and their constant support. The authors also want to thank Prof. Shaoming Guo, Prof. Yumeng Ou and Prof. Po Lam Yung for many helpful conversations during the program. }

{\bf Notation.} 

$\lesssim_\epsilon$: For nonnegative numbers $A, B$, $A\lesssim_\epsilon B$ means that $A \le C_\epsilon B$ for some constant $C_\epsilon$ which depends on $\epsilon$.
 
$K_p^{par}(\delta)$: We shall use $K_p^{par}(\delta)$ to denote the decoupling constant at scale $\delta$ associated to the standard parabola $(t,t^2),\ t\in[0,1]$ for exponent $p$. 

Rectangles: For a rectangle of side lengths $A$ and $B$, it will be understood that the associated sides are parallel to the first and second coordinate axes, respectively.
\section{Proof for the model case $(t,t^{1+\nu}), \nu>0$}

Fix an $\nu\in (0,\infty)$ and consider the compact curve $\{\gamma(t)=(t,t^{1+\nu})\subset\R^2, t\in[0,1]\}$. We have the following result which is stronger than Theorem \ref{model}:
\begin{proposition}For any $g\in L^1([0,1])$,
	\begin{equation}
	\norm{E_{[0,1]}g}_{L^p(\omega_{R_{\delta,r}})}\lesssim_{\ep}
	\delta^{-\ep}\left(\sum_{\Delta\in\ptt_{\delta^{1/2}}([0,1])}\norm{E_{\Delta}g}_{L^p(\omega_{R_{\delta,r}})}^2\right)^{1/2}
	\end{equation}
	holds for $2\le p\le 6$,  where $R_{\delta,r}$ is a rectangle of side lengths $\delta^{-1}$ and $\delta^{-r/2}$, with $r=\max\{1+\nu,2\}$.
\end{proposition}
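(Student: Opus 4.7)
The plan is to isolate the singular point $t=0$ via a dyadic decomposition and, on each dyadic piece bounded away from $0$, reduce to the classical $\ell^2$ decoupling of Bourgain--Demeter through an affine rescaling. Let $K\in\N$ satisfy $2^{-K-1}<\delta^{1/2}\le 2^{-K}$ and split
$$
[0,1] \,=\, [0,2^{-K}] \,\cup\, \bigcup_{k=0}^{K-1} I_k,\qquad I_k:=[2^{-k-1},2^{-k}].
$$
The innermost piece $[0,2^{-K}]$ has length comparable to $\delta^{1/2}$, so it is controlled by a bounded number of terms of $\ptt_{\delta^{1/2}}([0,1])$. For the remaining pieces, the triangle inequality in $L^p(\omega_{R_{\delta,r}})$ followed by Cauchy--Schwarz in $k$ gives
$$
\Big\|\sum_{k=0}^{K-1}E_{I_k}g\Big\|_{L^p(\omega_{R_{\delta,r}})} \,\le\, \sqrt{K}\,\Big(\sum_{k=0}^{K-1}\|E_{I_k}g\|^2_{L^p(\omega_{R_{\delta,r}})}\Big)^{1/2},
$$
with $\sqrt{K}\lesssim\sqrt{\log(1/\delta)}$ absorbed into $\delta^{-\epsilon}$. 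Thus it remains to produce, for each fixed $k$, a decoupling of $E_{I_k}g$ at scale $\delta^{1/2}$ with the weight $\omega_{R_{\delta,r}}$ on both sides.

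On a fixed $I_k$, apply the affine rescaling
$$
t = 2^{-k}s,\qquad \tilde x_1 = 2^{-k}x_1,\qquad \tilde x_2 = 2^{-k(1+\nu)}x_2,
$$
under which (up to a factor $2^{-k}$) $E_{I_k}g(x)$ becomes the extension operator $\tilde E_{[1/2,1]}\tilde g(\tilde x)$ for the curve $(s,s^{1+\nu})$ on $[1/2,1]$. Here $\varphi''(s)=\nu(1+\nu)s^{\nu-1}$ is smooth and bounded above and below, so the Bourgain--Demeter $\ell^2$ decoupling for curves with non-vanishing curvature applies for all $2\le p\le 6$. At scale $\tilde\delta^{1/2}:=2^k\delta^{1/2}$, subintervals of length $2^k\delta^{1/2}$ in $s$ correspond to subintervals of length $\delta^{1/2}$ in $t$; since $2^{2k}\delta\le 1$ for $k\le K-1$, the decoupling constant $\tilde\delta^{-\epsilon}\le\delta^{-\epsilon}$ is uniform in $k$. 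The natural weight, adapted to a $\tilde\delta^{-1}$-ball in the $\tilde x$ variables, pulls back to the weight $\omega_{R_k}$ with $R_k$ of side lengths $2^{-k}\delta^{-1}\times 2^{k(\nu-1)}\delta^{-1}$. Maximizing each side length over $0\le k\le K$ gives $\delta^{-1}\times\delta^{-r/2}$ with $r=\max\{1+\nu,2\}$, so every $R_k$ fits (up to translation) inside $R_{\delta,r}$.

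It remains to replace $\omega_{R_k}$ by $\omega_{R_{\delta,r}}$. On the right-hand side of each $k$-th decoupling this is immediate from the pointwise bound $\omega_{R_k}\le C\,\omega_{R_{\delta,r}}$. On the left-hand side it is a standard weighted averaging argument: tile $\R^2$ by translates $\{R_k+y\}$, apply the translation-invariant form of Bourgain--Demeter on each tile with the shifted weight $\omega_{R_k+y}$, and recombine using the fact that $\omega_{R_{\delta,r}}$ is essentially constant on tiles of the smaller size $R_k$. The main obstacle is making this weight transfer precise with constants uniform in $k$; the generous exponent $200$ in the definition of $\omega_B$ leaves ample room for a Schur-type bookkeeping to close. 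Summing over $k$ and combining with the trivial estimate on $[0,2^{-K}]$ then yields the proposition.
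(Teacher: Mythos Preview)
Your argument is correct, and it is genuinely different from the paper's. You exploit the exact homogeneity of $t\mapsto t^{1+\nu}$: the full parabolic rescaling $t=2^{-k}s$, $\tilde x_1=2^{-k}x_1$, $\tilde x_2=2^{-k(1+\nu)}x_2$ sends every dyadic piece to the \emph{fixed} curve $(s,s^{1+\nu})$ on $[1/2,1]$, so a single application of Bourgain--Demeter (with a constant independent of $k$) suffices, and your computation of the pulled-back rectangles $R_k$ and their common envelope $R_{\delta,r}$ with $r=\max\{1+\nu,2\}$ is right. The paper, by contrast, rescales only the second variable, obtaining the $a$-dependent curve $\gamma_a(t)=(t,a^{1-\nu}t^{1+\nu})$ on $[a,2a]$, and then runs an iteration: decouple at scale $\delta^{(1-\sigma)/2}$, approximate $\gamma_a$ by a parabola on each piece via Taylor (this is where the restriction $a\ge\delta^{1/2-\epsilon}$ enters), apply $K_p^{par}$, and bootstrap. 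Your route is more elementary for this particular curve; the paper's iteration is the mechanism that survives to the general non-homogeneous setting of Proposition~\ref{mainprop}, and the model case is written to preview that. The weight transfer you flag as the remaining technicality is exactly the Minkowski/tiling argument the paper also invokes (``by Minkowski, \eqref{it} implies estimates with the same constant over any spatial cubes with side length larger than $\delta^{-1}$''), so it is not a gap.
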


\begin{proof}
	Given $0<\ep\ll1$. Decompose the unit interval into
	$$
	[0,1]=\left[0,\delta^{1/2-\ep}\right]\cup\bigcup_{k=1}^K\left[2^{k-1}\delta^{1/2-\ep},2^{k}\delta^{1/2-\ep}\right].
	$$
	Note that we can afford logarithmic losses in $\delta$ and the number of $k$'s is $O(\log(\delta^{-1}))$, so it suffices to show that for any $\delta^{1/2-\ep}\le a\le 1/2$,
	\begin{equation}\label{EaBig}
	\norm{E_{[a,2a]}g}_{L^p(\omega_{R_{\delta,r}})}\lesssim_{\ep}
	\delta^{-\ep}\left(\sum_{\Delta\in\ptt_{\delta^{1/2}}([a,2a])}\norm{E_{\Delta}g}_{L^p(\omega_{R_{\delta,r}})}^2\right)^{1/2}.
	\end{equation}
	We claim that for any $a\in [\delta^{1/2-\ep},1/2]$, we have the following inequality:
	\begin{equation}\label{Ea}
	\norm{E_{[a,2a]}g}_{L^p(\omega_{{R_{a,\nu,\delta}}})}
	\lesssim_{\ep}
	\delta^{-\ep}
	\left(\sum_{\Delta\in\ptt_{\delta^{1/2}}([a,2a])}\norm{E_{\Delta}g}_{L^p(\omega_{R_{a,\nu,\delta}})}^2\right)^{1/2},
	\end{equation}
	where ${R_{a,\nu,\delta}}$ is a rectangle of size $\delta^{-1}\times a^{1-\nu}\delta^{-1}$.
	
	Once we prove \eqref{Ea}, \eqref{EaBig} follows by Minkowski's inequality and the fact that $a\le 1$.
	Given $[a,2a]$, let $t_0\in [a,2a]$. The Taylor expansion
	$$
	a^{1-\nu}(t_0+\Delta t)^{1+\nu}
	=a^{1-\nu}(t_0^{1+\nu}+(1+\nu)t_0^{\nu}\Delta t+\frac{(1+\nu)\nu}{2}t_0^{\nu-1}(\Delta t)^2)
	+a^{1-\nu}t_0^{\nu-2}O((\Delta t)^3)
	$$
	shows that if $\Delta t\le\delta^{1/2-\sigma}$, where $\sigma=\frac{\ep}{3}$, then on the interval $[t_0,t_0+\Delta t]$, the curve
	$\gamma_a(t):=(t,a^{1-\nu}t^{1+\nu})$ is within $\delta$ from the parabola
	\begin{equation}\label{parabola}
	\left(t,a^{1-\nu}(t_0^{1+\nu}+(1+\nu)t_0^{\nu}t+\frac{(1+\nu)\nu}{2}t_0^{\nu-1}t^2)\right).
	\end{equation}
	In fact, since $a\ge\delta^{1/2-\ep}$, the error
	$\abs{a^{1-\nu}t_0^{\nu-2}(\Delta t)^3}\sim\abs{a^{-1}(\Delta t)^3}\le\delta$.
	Define
	$$
	E_{\Delta,\gamma_a}g(x_1,x_2)=\int_{\Delta}g(t)e(t x_1+a^{1-\nu}t^{1+\nu}x_2)dt,
	$$
	then
	$$
	E_{\Delta}g(x_1,x_2)=E_{\Delta,\gamma_a}g(x_1,a^{\nu-1}x_2),
	$$
	and thus
	\begin{equation}
	\norm{E_{\Delta}g}_{L^p({R_{a,\nu,\delta}})}
	=a^{\frac{1-\nu}{p}}\norm{E_{\Delta,\gamma_a}g}_{L^p(Q_{\delta})},
	\end{equation}
 where $Q_{\delta}$ is a cube of side length $\delta^{-1}$.
	So to prove the claim, it suffices to show
	\begin{equation}
	\norm{E_{[a,2a],\gamma_a}g}_{L^p(\omega_{Q_{\delta}})}
	\lesssim_{\ep}
	\delta^{-\ep}
	\left(\sum_{\Delta\in\ptt_{\delta^{1/2}}([a,2a])}\norm{E_{\Delta,\gamma_a}g}_{L^p(\omega_{Q_{\delta}})}^2\right)^{1/2}.
 	\end{equation}
	This is equivalent to showing the smallest constant $K_p(\delta)$ that makes the following inequality hold satisfies $K_p(\delta)\lesssim_{\ep}\delta^{-\ep}$:
\begin{equation} \label{it}
	\norm{f}_{L^p(\omega_{Q_{\delta}})}\le K_p(\delta)\Bigg(\sum_{\theta\in P_{\delta}}\norm{f_{\theta}}^2_{L^p(\omega_{Q_{\delta}})}\Bigg)^{1/2},
\end{equation}
	where $\supp \hat{f}$ is contained in $\mathscr{N}_{\delta}(\gamma_a)$, the $\delta$ neighborhood of the curve $\{\gamma_a(t),a\le t\le2a\} $, and $P_{\delta}$ is a finitely overlapping cover of $\mathscr{N}_{\delta}$ with curved regions $\theta$ of the form
	$$
	\theta=\{(t,\eta+a^{1-\nu}t^{1+\nu}):t\in I_{\theta},\abs{\eta}\le 2\delta\},
	$$
	where $I_{\theta}$ runs over all intervals with length $\delta^{1/2}$ and center belongs to $\delta^{1/2}\mathbb Z\cap[a,2a]$. Note that by Minkowski, \eqref{it} implies estimates with the same constant over any spatial cubes with side length larger than $\delta^{-1}$, and thus we shall always use the weight associated to the largest spatial cube throughout our iteration.
	
	We apply the iteration argument sketched in \cite{bourgain2015proof}, \cite{garrigos2010mixed} and \cite{Pramanik2007}. Namely, we have
	$$
	\norm{f}_{L^p(\omega_{Q_{\delta}})}\le K_p(\delta^{1-\sigma})\left(\sum_{\tau\in P_{\delta^{1-\sigma}}}\norm{f_{\tau}}^2_{L^p(\omega_{Q_{\delta}})}\right)^{1/2}.
	$$
	And the decoupling inequality for the parabola \eqref{parabola} implies that
	$$
	\norm{f_{\tau}}_{L^p(\omega_{Q_{\delta}})}\le K^{par}_p(\delta)\left(\sum_{\theta\in P_{\delta},\theta\subset\tau}\norm{f_{\theta}}^2_{L^p(\omega_{Q_{\delta}})}\right)^{1/2},
	$$
	where $K^{par}_p$ is the standard $(t,t^2)$ paraboloid decoupling constant with $$K_p^{par}(\delta)\lesssim_{\ep}\delta^{-\ep}.$$
	
	We iterate to get
	$$
	K_p(\delta)\le C_{\ep}^k\delta^{-\ep(1-(1-\sigma)^k)}K_p(\delta^{(1-\sigma)^k}),
	$$
	where $k$ should be chosen so that $\delta^{(1-\sigma)^k}\sim a^2 \leq 1/4$. From this it follows that
	$K_p(\delta)\le C_{\ep}\delta^{-\ep}$, with the constant uniform in $a$. A more detailed treatment of this iteration process will be given in the next section.
\end{proof}

\section{Proof of Theorem \ref{main}}
The work of Bourgain-Demeter implies the following:

\begin{lemma}
	Let $\varphi\in C^{2,\alpha}[0,1]$ satisfy $\varphi''>0$. Then we have, for all $2\leq p \leq 6$,
	$$
	\|E_{[0,1]}g\|_{L^p(\omega_{Q_{\delta}})}\lesssim_\epsilon \delta^{-\epsilon}\left(\sum_{\Delta\in \ptt_{\delta^{1/2}}([0,1])}\|E_\Delta  g\|^2_{L^p(\omega_{Q_{\delta}})}\right)^{1/2}
	$$
	for all $g\in L^1([0,1])$, where $Q_{\delta}$ is a cube of side length $\delta^{-1}$.
	
\end{lemma}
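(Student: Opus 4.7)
The plan is to carry out the iteration argument from Section 2 (used there for the model curve $(t,t^{1+\nu})$) in this more general setting, treating $(t,\varphi(t))$ as locally parabolic at each dyadic sub-scale in the iteration. Throughout, let $K_p(\delta)$ denote the best constant in the claimed inequality, and the goal is to prove $K_p(\delta)\lesssim_\epsilon \delta^{-\epsilon}$.

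The geometric input is an application of Taylor's theorem. Fix $\sigma \in (0,\alpha/(2+\alpha)]$ and partition $[0,1]$ into subintervals $\tau$ of length $\delta^{(1-\sigma)/2}$. For $\tau$ centered at $t_0$, since $\varphi''\in C^{0,\alpha}$,
$$
\Big|\varphi(t) - \varphi(t_0) - \varphi'(t_0)(t-t_0) - \tfrac{1}{2}\varphi''(t_0)(t-t_0)^2\Big| \leq C\,|\varphi''|_{C^{0,\alpha}([0,1])}\,|t-t_0|^{2+\alpha} \leq \delta
$$
for $t\in\tau$, by the choice of $\sigma$. Hence the $\delta$-neighborhood of the graph of $\varphi$ over $\tau$ is comparable (up to a bounded dilation) with the $\delta$-neighborhood of the parabola $P_\tau$ matching $\varphi$ to second order at $t_0$. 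Because $\varphi''>0$ is continuous on the compact interval $[0,1]$, $\varphi''(t_0)$ is uniformly bounded away from $0$ and $\infty$, so $P_\tau$ is affinely equivalent to the standard parabola $(t,t^2)$ with uniformly bounded affine factors.

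The iteration now proceeds exactly as in Section 2. First, decouple at the coarse scale $\delta^{1-\sigma}$ using the definition of $K_p$:
$$
\|E_{[0,1]}g\|_{L^p(\omega_{Q_\delta})} \leq K_p(\delta^{1-\sigma})\Big(\sum_\tau \|E_\tau g\|_{L^p(\omega_{Q_\delta})}^2\Big)^{1/2}.
$$
Then, on each $\tau$, invoke Bourgain--Demeter's decoupling for the standard parabola, transferred to $P_\tau$ via the affine map, and applied at the rescaled scale $\delta^\sigma$ (the ratio of the fine neighborhood width $\delta$ to the square of the coarse length $\delta^{(1-\sigma)/2}$):
$$
\|E_\tau g\|_{L^p(\omega_{Q_\delta})} \leq C_\epsilon \delta^{-\sigma\epsilon}\Big(\sum_{\Delta \subset \tau,\,|\Delta|=\delta^{1/2}} \|E_\Delta g\|_{L^p(\omega_{Q_\delta})}^2\Big)^{1/2}.
$$
Combining produces the recursion $K_p(\delta)\leq C_\epsilon \delta^{-\sigma\epsilon}\,K_p(\delta^{1-\sigma})$, and iterating $k$ times yields
$$
K_p(\delta) \leq C_\epsilon^k\,\delta^{-\epsilon(1-(1-\sigma)^k)}\,K_p\big(\delta^{(1-\sigma)^k}\big).
$$
Choosing $k\sim \log\log(\delta^{-1})$ terminates the recursion at constant scale, where $K_p$ is bounded by a universal constant, while the accumulated factor $C_\epsilon^k=(\log(\delta^{-1}))^{O(1)}$ is absorbed into $\delta^{-\epsilon'}$ for any $\epsilon'>0$.

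The main obstacle is arranging the parabolic approximation of $\varphi$ with precision $\delta$ on the scales used in the iteration; this is precisely what the H\"older regularity of $\varphi''$ supplies, and it forces the admissible range $\sigma \leq \alpha/(2+\alpha)$. A secondary technical point is that the Bourgain--Demeter parabola decoupling is being applied to an $O(\delta)$-perturbation of a parabola rather than to the parabola itself; this is handled by a standard stability argument, using that the $\delta$-neighborhoods of the two curves coincide up to a bounded dilation and that the weights $\omega_{Q_\delta}$ are comparable across the induced affine change of coordinates (with spatial integration over the larger cube justified by Minkowski's inequality).
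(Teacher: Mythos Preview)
Your argument is correct and is precisely the iteration scheme the paper invokes: the paper does not give a proof of this lemma but simply attributes it to Section~7 of Bourgain--Demeter \cite{bourgain2015proof} (together with \cite{garrigos2010mixed,Pramanik2007}), and what you have written is exactly that argument, identical in structure to the model-case proof in Section~2 and to the proof of Proposition~\ref{mainprop}. The choice $\sigma\le \alpha/(2+\alpha)$, the Taylor approximation with H\"older remainder, the use of Minkowski to pass to larger cubes, and the $k\sim\log\log(\delta^{-1})$ termination are all as expected.
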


To prove Theorem \ref{main}, it will suffice to prove the following proposition.

\vspace{.5pc}

First, recall the simplified definitions:
$$ r_{2,S}=\sup\Big\{s:\lim_{t\rightarrow 0^+}\frac{\varphi''(t)}{t^s}=0\Big\},
\quad \quad
r_{2,I}=\inf\Big\{s:\lim_{t\rightarrow 0^+}\Big|\frac{\varphi''(t)}{t^s}\Big|=\infty\Big\},
$$
and, defining
$\varphi^{2,\alpha}(t):=|\varphi''|_{C^{0,\alpha}([t,2t])},
\quad
\overline{r}_{2+\alpha,S}=\sup\{s:\lim_{t\rightarrow 0^+}\frac{\varphi^{2,\alpha}(t)}{t^s}=0\}.$

\begin{proposition}\label{mainprop}
	Let $\varphi''$ be positive and locally $\alpha$-H\"older continuous in $(0,\tilde{c}]$, for some $\alpha, \tilde{c}>0$. Also, let the orders of vanishing of $\varphi''$ satisfy
	\begin{enumerate}
\item	$r_{2,S}=r_{2,I}=:r_2\in (-1,\infty)$
\item   $\overline{r}_{2+\beta,S}\geq r_2-\beta+o(\beta)$, for $\beta\in (0,\alpha]$.	
	\end{enumerate} Then we have, for all $2\leq p \leq 6$ and with $r:=\max\{r_2+2,2\}$,
	$$
	\|E_{(0,c]}g\|_{L^p(\omega_{R_{\delta,r}})}\lesssim_\epsilon \delta^{-\epsilon}\left(\sum_{\Delta\in \ptt_{\delta^{1/2}}((0,c])}\|E_\Delta  g\|^2_{L^p(\omega_{R_{\delta,r}})}\right)^{1/2}
	$$
	for all integrable $g: (0,c]\rightarrow \C$, where $R_{\delta,r}$ is a rectangle of side lengths $\delta^{-1}$ and $\delta^{-\frac r2}$, and $c$ is chosen sufficiently small, independent of $\delta$, $\epsilon$, or $\alpha$.
	
\end{proposition}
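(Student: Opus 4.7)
The plan is to adapt the model-case argument of Section 2 verbatim, replacing the explicit parabolic model $(t, a^{1-\nu}t^{1+\nu})$ by the osculating parabola to $(t, \varphi(t))$ on each dyadic annulus, and using hypothesis (2) to control the Taylor remainder. First I would dyadically decompose
\[
(0,c] = (0, \delta^{1/2-\epsilon}] \;\cup\; \bigcup_{k} [a_k, 2a_k], \qquad a_k \asymp 2^{k}\delta^{1/2-\epsilon},
\]
yielding $O_\epsilon(\log\delta^{-1})$ pieces. The triangle inequality then costs only a $\delta^{-\epsilon}$ factor, reducing matters to decoupling on each piece. The innermost interval $(0, \delta^{1/2-\epsilon}]$ meets only $O(\delta^{-\epsilon})$ subintervals in $\ptt_{\delta^{1/2}}$, so triangle inequality combined with Cauchy--Schwarz settles it immediately.

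The heart of the argument is to prove, for each $a \in [\delta^{1/2-\epsilon}, c/2]$, the uniform scale-dependent estimate
\[
\|E_{[a,2a]}g\|_{L^p(\omega_{R_a})} \lesssim_\epsilon \delta^{-\epsilon}\Bigl(\sum_{\Delta \in \ptt_{\delta^{1/2}}([a,2a])} \|E_\Delta g\|^2_{L^p(\omega_{R_a})}\Bigr)^{1/2},
\]
where $R_a$ is a rectangle of size $\delta^{-1} \times \varphi''(a)^{-1}\delta^{-1} \sim \delta^{-1}\times a^{-r_2}\delta^{-1}$. Using condition (1) and the constraint $a \ge \delta^{1/2-\epsilon}$, one checks $R_a \subset R_{\delta,r}$, so a standard translation-invariance plus Minkowski argument promotes the above estimate to the same estimate with the larger weight $\omega_{R_{\delta,r}}$, and the dyadic pieces can be reassembled. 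For any base point $t_0 \in [a,2a]$, Taylor's theorem with a $C^{0,\alpha}$ remainder, combined with hypothesis (2), gives
\[
|\varphi(t_0+h) - p_{t_0}(h)| \lesssim |\varphi''|_{C^{0,\alpha}([a,2a])}|h|^{2+\alpha} \lesssim a^{r_2-\alpha+o(\alpha)}|h|^{2+\alpha},
\]
where $p_{t_0}$ is the degree-two Taylor polynomial at $t_0$. For $|h| \le \delta^{1/2-\sigma}$ with $\sigma = \sigma(\epsilon,\alpha)$ sufficiently small, and for $|x_2| \lesssim a^{-r_2}\delta^{-1}$, the phase error $|x_2\cdot(\varphi-p_{t_0})(h)|$ is of order $\delta^{\epsilon\alpha - \sigma(2+\alpha)+o(\alpha)}$, which is $o(1)$ provided $\sigma < \epsilon\alpha/(2+\alpha)$. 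Hence on subintervals of length $\delta^{1/2-\sigma}$, the curve is indistinguishable from its osculating parabola for decoupling purposes. The affine change of variables $(x_1, x_2) \mapsto (x_1 + \varphi'(t_0)x_2,\, \tfrac{1}{2}\varphi''(t_0)x_2)$ then carries $p_{t_0}$ to the standard parabola $(t, t^2)$ and $R_a$ to a cube $Q_\delta$ of side $\delta^{-1}$. The iteration scheme of Section 2 runs verbatim: letting $K_p^{(a)}(\delta)$ denote the best decoupling constant for $\varphi|_{[a, 2a]}$ at scale $\delta$, we alternate between the inductive bound $K_p^{(a)}(\delta^{1-\sigma})$ at the parent scale and Bourgain--Demeter's parabolic constant $K_p^{par}(\delta)\lesssim_\epsilon \delta^{-\epsilon}$ within each scale-$\delta^{1-\sigma}$ curved box; the recursion terminates when $\delta^{(1-\sigma)^k}\sim a^2$, at which point the rescaled annulus has length $O(1)$ and the preceding lemma (decoupling for curves with $\varphi'' \in C^{0,\alpha}$ bounded away from $0$ and $\infty$) applies directly, yielding $K_p^{(a)}(\delta) \lesssim_\epsilon \delta^{-\epsilon}$ uniformly in $a$.

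The hardest part will be the delicate balancing of parameters in the Taylor step: the lower threshold $a \ge \delta^{1/2-\epsilon}$, the auxiliary scale $\delta^{1-\sigma}$, and the Hölder exponent $\beta \le \alpha$ in hypothesis (2) must be coordinated so that the Taylor remainder remains a negligible perturbation uniformly across all dyadic annuli, while the iteration still terminates in $O(\log\log\delta^{-1})$ steps with constants independent of $a$. A secondary technical nuisance is the weight bookkeeping: one must verify via a Schwartz-tails and Minkowski argument that the cube-weight $\omega_{Q_\delta}$ appearing in the iterated estimate promotes first to $\omega_{R_a}$ after unrescaling, and thence to the single $\omega_{R_{\delta,r}}$ of the proposition after summing the dyadic contributions.
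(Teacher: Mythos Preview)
Your proposal is correct and follows essentially the same approach as the paper: dyadic decomposition of $(0,c]$, trivial handling of the innermost piece, rescaling each annulus $[a,2a]$ so that the spatial rectangle becomes a $\delta^{-1}$-cube, Taylor approximation by the osculating parabola with a $C^{0,\beta}$ remainder controlled via hypothesis (2), and the Bourgain--Demeter-style iteration terminating at $\delta^{(1-\sigma)^k}\sim a^2$. The one place to be careful is exactly the parameter balancing you flag at the end: the inline computation with fixed $\alpha$ only works after replacing $\alpha$ by a small $\beta=\beta(\epsilon)$ so that the $o(\beta)$ in hypothesis~(2) is dominated by $\epsilon\beta$, which is precisely how the paper proceeds.
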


\begin{proof} Decompose interval $(0,c]$ into:
$$
(0,c]=\left(0,\delta^{1/2-\epsilon}\right]\cup\bigcup_{k=1}^{K}\left[2^{k-1}\delta^{1/2-\epsilon}, 2^k\delta^{1/2-\epsilon}\right].
$$
We automatically get the desired decoupling on the $(0,\delta^{\frac 12-\epsilon}]$ interval. Since we can afford log losses in $\delta$ and the number of $k$'s is $O(\log(\delta^{-1}))$, it suffices to show that for any $\delta^{1/2-\epsilon}<a<c$,
\begin{equation}\label{Eg1}
\|E_{[a,2a]}g\|_{L^p(\omega_{R_{\delta,r}})}\lesssim_\epsilon \delta^{-\epsilon}\left(\sum_{\Delta\in \ptt_{\delta^{1/2}}([a,2a])}\|E_\Delta  g\|^2_{L^p(\omega_{R_{\delta,r}})}\right)^{1/2}.
\end{equation}
Define $$\varphi''_a:=\min\{\varphi''(t):t\in [a,2a]\}.$$ By hypothesis (1), there exists some constant $D_{\epsilon}>0$ such that
\begin{equation}\label{lowerbd_phi2}
\varphi''_a\geq D_{\epsilon}^{-1}a^{r_2+\frac{\epsilon r_2}{100}}\geq D_{\epsilon}^{-1}\delta^\frac{r_2}{2}
\end{equation}
for $\delta^{\frac 12-\epsilon}<a<c$, $c$ sufficiently small. Defining $R_{\delta,\varphi''_a}$ as a rectangle with side lengths $\delta^{-1}$ and $ \delta^{-1}(\varphi''_a)^{-1}$, we claim that
\begin{equation}\label{Eg2}
\|E_{[a,2a]}g\|_{L^p(\omega_{R_{\delta,\varphi''_a}})}\lesssim_\epsilon \delta^{-\epsilon}\left(\sum_{\Delta\in \ptt_{\delta^{1/2}}([a,2a])}\|E_\Delta  g\|^2_{L^p(\omega_{R_{\delta,\varphi''_a}})}\right)^{1/2}.
\end{equation}
By Minkowski and \eqref{lowerbd_phi2}, \eqref{Eg2} will imply \eqref{Eg1} if $c$ is chosen sufficiently small (where, due to our use of weighted norms and the definition of $\omega_R$, the extra $D_\epsilon$ can be absorbed into the implicit $\lesssim_\epsilon$ constant).
\\

 Let $t_0\in [a,2a]$, and define the curve $\gamma_{a}(t)=\left(t,\frac{2\varphi(t)}{\varphi''_a}\right)$. Similarly, define the paraboloid
\begin{equation}\label{rho1}
\rho_{a,t_0}(t)=\left(t,\frac{2\varphi(t_0)}{\varphi''_a}+\frac{2\varphi'(t_0)}{\varphi''_a}(t-t_0)+\frac{\varphi''(t_0)}{\varphi''_a}(t-t_0)^2\right).
\end{equation}
Then, for $0\leq \Delta t\lesssim t_0$,
\begin{align*}
|\gamma_{a}(t_0+\Delta t)-\rho_{a,t_0}(t_0+\Delta t)|&\lesssim \Bigg{|}\frac{\varphi^{2,\beta}(t_0)}{\varphi''_a}\Bigg{|}(\Delta t)^{2+\beta}\leq C_{\epsilon} \frac{a^{\overline{r}_{2+\beta,S}-\frac{\epsilon \beta}{27}}}{a^{r_2+\frac{\epsilon \beta}{27}}}(\Delta t)^{2+\beta}
\\
&\leq C_{\epsilon} a^{-(\beta+\frac{\epsilon \beta}{9})}(\Delta t)^{2+\beta}
\end{align*}
by Taylor's Theorem, hypothesis on $r_2$, and hypothesis on $\overline{r}_{2+\beta,S}$ for a proper choice of $\beta(\epsilon)$. Namely, since $\overline{r}_{2+\beta,S}\geq r_2-\beta+o(\beta)$ by assumption (2), there exists a choice of $\beta(\epsilon)$ that vanishes at $0$ sufficiently quickly such that $\overline{r}_{2+\beta,S}\geq r_2-\beta(\epsilon)-\frac{\epsilon \beta(\epsilon)}{27}$.

\vspace{.5pc}

Letting $\Delta t_{max}=\sup\{s: |(\gamma_{a}-\rho_{a,t_0})(t_0+\Delta t)|<\delta \text{ for all } 0<\Delta t<s\} $ and recalling that $a\geq \delta^{1/2-\epsilon}$, then either $\Delta t_{max}>a$ or, for some constant $c_\epsilon>0$,
\begin{equation}\label{tmax}
\Delta t_{max}\geq c_{\epsilon} \delta^{\frac{1}{2+\beta}}a^\frac{\beta+\frac{\epsilon \beta}{9}}{2+\beta}\geq c_{\epsilon} \delta^\frac 1{2+\beta} \delta^\frac{\frac{\beta}{2}}{2+\beta}\delta^\frac{\frac{\epsilon \beta}{18}-\epsilon\beta}{2+\beta}\geq c_{\epsilon} \delta^{\frac 12-\frac{\epsilon \beta}{4}}.
\end{equation}
Therefore, on $[t_0,\min\{2a,t_0+c_{\epsilon}\delta^{\frac 12-\frac{\epsilon\beta}{4}}\}]$, $(t,\rho_{a,t_0}(t))\in \scriptN_\delta(\gamma_a)$.

\vspace{.5pc}

Defining
	$$
	E_{\Delta,\gamma_a}g(x_1,x_2)=\int_{\Delta}g(t)e(t x_1+\tfrac{2\varphi(t)}{\varphi''_a}x_2)dt,
	$$
	then
	$$
	E_{\Delta}g(x_1,x_2)=E_{\Delta,\gamma_a}g(x_1,\tfrac{\varphi''_a}{2}x_2),
	$$
	and thus
	\begin{equation*}
	\norm{E_{\Delta}g}_{L^p({\omega_{R_{\delta,\varphi''_a}}})}
	=\left(\tfrac{2}{\varphi''_a}\right)^{1/p}\norm{E_{\Delta,\gamma_a}g}_{L^p(\omega_{Q_{\delta}})},
	\end{equation*}
 where $Q_{\delta}$ is a cube of side length $\delta^{-1}$. Thus to prove the claim, it suffices to show
	\begin{equation*}
	\norm{E_{[a,2a],\gamma_a}g}_{L^p(\omega_{Q_{\delta}})}
	\lesssim_{\ep}
	\delta^{-\ep}
	\left(\sum_{\Delta\in\ptt_{\delta^{1/2}}([a,2a])}\norm{E_{\Delta,\gamma_a}g}_{L^p(\omega_{Q_{\delta}})}^2\right)^{1/2}.
	\end{equation*}
	This is equivalent to showing the smallest constant $K_p(\delta)$ that makes the following inequality holds satisfies $K_p(\delta)\lesssim_{\ep}\delta^{-\ep}$: (noting that $K_p(\delta)$ is tied to $\delta^{1/2}$ partitions)
	$$
	\norm{f}_{L^p(\omega_{Q_\delta})}\le K_p(\delta)\left(\sum_{\theta\in P_{\delta}}\norm{f_{\theta}}^2_{L^p(\omega_{Q_\delta})}\right)^{1/2},
	$$
	where $\supp \hat{f}$ is contained in $\mathscr{N}_{\delta}(\gamma_a)$, the $\delta$ neighborhood of the curve $\{\gamma_a(t),a\le t\le2a\} $, and $P_{\delta}$ is a finitely overlapping cover of $\mathscr{N}_{\delta}$ with curved regions $\theta$ of the form
	$$
	\theta=\{(t,\eta+\gamma_a(t)):t\in I_{\theta},\abs{\eta}\le 2\delta\},
	$$
	where $I_{\theta}$ runs over all intervals with length $\delta^{1/2}$ and center belongs to $\delta^{1/2}\mathbb Z\cap[a,2a]$.
	
\vspace{1pc}	
	
	We apply the iteration argument sketched in \cite{bourgain2015proof}, \cite{garrigos2010mixed} and \cite{Pramanik2007} Namely, we have
	$$
	\norm{f}_{L^p(\omega_{Q_\delta})}\le K_p(c_\epsilon^2 \delta^{1-\epsilon\beta/2})\left(\sum_{\tau\in P_{\delta^{1-\epsilon\beta/2}}}\norm{f_{\tau}}^2_{L^p(\omega_{Q_\delta})}\right)^{1/2}.
	$$
	And due to \eqref{tmax}, we can use the decoupling inequality for the parabola $\rho_{a,t_0}$ in \eqref{rho1} to obtain
	$$
	\norm{f_{\tau}}_{L^p(\omega_{Q_\delta})}\le K^{par}_p(\delta)\left(\sum_{\theta\in P_{\delta},\theta\subset\tau}\norm{f_{\theta}}^2_{L^p(\omega_{Q_\delta})}\right)^{1/2},
	$$
	where $K^{par}_p$ is the standard $(t,t^2)$ paraboloid decoupling constant with $$K^{par}_p(\delta)\leq C_\sigma \delta^{-\sigma}$$
	for all $\sigma>0$. Note that $\rho_{a,t_0}$ has constant curvature $\frac{\varphi''(t_0)}{\varphi''_a}\gtrsim 1$, and due to rescaling and an application of Minkowski,  $K^{par}_p$ still bounds the decoupling for $\rho_{a,t_0}$. More specifically, normalizing the curvature of $\rho_{a,t_0}$ will expand the spatial rectangles while leaving $K_p$ unchanged. Finally, an application of Minkowski implies that decoupling constants $K_p$ over larger spatial rectangles are bounded by the decoupling constants over smaller rectangles.
	
	\vspace{.5pc}
	
	Using $\epsilon^2\beta/ 4$ in place of $\sigma$, this implies that
	$$K_p(\delta)\leq C_{\epsilon^2\beta/4} \delta^{-\epsilon^2\beta/4} K_p(c_\epsilon^2 \delta^{1-\epsilon\beta/2})\leq c_\epsilon^{-1}C_{\epsilon^2\beta/4} \delta^{-\epsilon^2\beta/4} K_p(\delta^{1-\epsilon\beta/2}).
$$
For the second inequality, we used the fact that $K_p(\sigma_1^2\sigma_2^2)\leq \sigma_1^{-1}K_p(\sigma_2^2)$. Denote $\overline{C}_\epsilon=c_\epsilon^{-1}C_{\epsilon^2\beta/4}$.
	Using $1+(1-\frac{\epsilon\beta}{2})+...+(1-\frac{\epsilon\beta}{2})^{k-1}=\frac{2}{\epsilon\beta}(1-(1-\frac{\epsilon\beta}{2})^k)$, we iterate to get
	\begin{align*}
	K_p(\delta)&\le \overline{C}_\epsilon^k\delta^{-\frac{\epsilon}{2}(1-(1-\frac{\epsilon\beta}{2})^k)}K_p(\delta^{(1-\frac{\epsilon\beta}{2})^k})
	\\
	&\leq \overline{C}_\epsilon^k\delta^{-\frac{\epsilon}{2}}K_p(\delta^{(1-\frac{\epsilon\beta}{2})^k}),
	\end{align*}
	where $k$ should be chosen so that $\delta^{(1-\frac{\epsilon\beta}{2})^k}\sim a^2$. Replacing $a^2$ with $e^{-1}$, we get
$$
k\leq \frac{\log \log (\delta^{-1/2})}{-\log(1-\epsilon\beta/2)},
$$	
so
$$
\overline{C}_\epsilon^k\leq (\log (\delta^{-1/2}))^\frac{\overline{C}_\epsilon}{\log(1-\epsilon\beta/2)}\leq C_\epsilon \delta^{-\epsilon/2}
$$	
for any choice of $\beta(\epsilon)$.
	From this it follows that
	$K_p(\delta)\le C_{\ep}\delta^{-\ep}$, with the constant uniform in $a$. (Note that $K_p(e^{-1})\approx 1$.)
\end{proof}

Now we finish the paper with a simple proof of Corollary \ref{general} and a corresponding discretized version. The latter might be useful for future applications which gives bounds similar to those of Wooley \cite{Wooley}.
\begin{proof}[Proof of Corollary \ref{general}] The Wronskian of $(\varphi_1',\varphi_2')$ is:
		\[\det\left[\begin{matrix}
		\varphi_1'& \varphi_2'\\
		\varphi_1''& \varphi_2''
		\end{matrix}\right]=\varphi_1'\varphi_2''-\varphi_2'\varphi_1''.\] Given $t_0$, since $S$ is a regular curve, we may assume $\varphi'_1(t)\neq0$ near $t_0$. Using a partition of unity, it suffices to consider a piece of the curve containing $t_0$ where $\varphi'_1(t)\neq0$.  When we do the change of variable $s=\varphi_1(t)$, then the curve becomes
		\[(s,\varphi_2(\varphi_1^{-1}(s)))=(s,\psi(s)),\]
		where $t=\varphi^{-1}_1(s)$, and $$\psi'(s)=\frac{\varphi'_2(t)}{\varphi'_1(t)}=\frac{\varphi'_2(\varphi^{-1}_1(s))}{\varphi'_1(\varphi^{-1}_1(s))},$$
		with a non-zero denominator by our assumption. Then the second derivative equals
		$$\psi''(s)=\frac{[\varphi_1'\varphi_2''-\varphi_2'\varphi_1''](t)}{[\varphi'_1(t)]^2}=\frac{[\varphi_1'\varphi_2''-\varphi_2'\varphi_1''](\varphi^{-1}_1(s))}{[\varphi'_1(\varphi^{-1}_1(s))]^2}=\frac{\text{Wronskian at }t}{[\varphi'_1(t)]^2},$$
	which satisfies the requirement of Theorem \ref{main}, with the order of vanishing of the curvature equaling the order of vanishing of the Wronskian, if the Wronskian only vanishes of finite order at a finite number of points.
		
	\end{proof}
	
Corollary \ref{general} gives the following discretized version.
\begin{theorem}\label{discretized}
	If  $\frac{n-1}{N}<t_n\leq\frac{n}{N}$ for each $1\leq n\leq N$ is a collection of points in $[0,1]$, then for each $R\geq N^r$, $(\varphi_1,\varphi_2)$ as in Corollary \ref{general}, we have
	$$
	\left(\frac{1}{R^2}\int\Bigg|\sum_{n=1}^Na_ne(x_1\varphi_1(t_n)+x_2\varphi_2(t_n))\Bigg|^6\,w_{B_R}(x)dx_1dx_2\right)^{1/6}\hspace{-.5cm}\lesssim_\epsilon N^\epsilon\left(\sum_{n=1}^N|a_n|^2\right)^{1/2}.
	$$
	Here $B_R$ denotes a ball of radius $R$.
\end{theorem}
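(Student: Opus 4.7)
The plan is to realize the exponential sum
\[
F(x) := \sum_{n=1}^N a_n e(x\cdot\gamma(t_n)), \qquad \gamma := (\varphi_1,\varphi_2),
\]
as a pointwise limit of smooth extension operators, apply Corollary \ref{general}, and bound each partition-piece contribution by the trivial $L^\infty$ estimate. I would set $\delta := N^{-2}$, so that $\ptt_{\delta^{1/2}}([0,1])$ consists of the $N$ intervals $J_n := ((n-1)/N, n/N]$ (each containing exactly the point $t_n$) and $B_{\delta,r}$ is a ball of radius $N^r$.

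For each sufficiently small $\eta > 0$, I would choose a nonnegative bump $\psi_\eta \in C_c^\infty((-\eta,\eta))$ with $\int \psi_\eta = 1$ and $\supp \psi_\eta(\cdot - t_n) \subset J_n$ for every $n$, and set $g_\eta(t) := \sum_{n=1}^N a_n \psi_\eta(t - t_n) \in L^1([0,1])$. Since $\|g_\eta\|_{L^1(J_n)} = |a_n|$, the extension operator satisfies $\|E^S_{J_n} g_\eta\|_{L^\infty(\R^2)} \le |a_n|$, whence (using $\int \omega_{B_{N^r}} \sim N^{2r}$) we obtain $\|E^S_{J_n} g_\eta\|_{L^6(\omega_{B_{N^r}})}^2 \lesssim |a_n|^2 N^{2r/3}$, uniformly in $\eta$. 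Summing over $n$ and applying Corollary \ref{general} yields
\[
\|E^S_{[0,1]} g_\eta\|_{L^6(\omega_{B_{N^r}})} \lesssim_\epsilon N^{2\epsilon} N^{r/3} \|a\|_{\ell^2}.
\]

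Since $\gamma$ is uniformly continuous on $[0,1]$, $E^S_{[0,1]} g_\eta(x) \to F(x)$ pointwise as $\eta \to 0^+$, so Fatou's lemma upgrades the preceding bound to $\|F\|_{L^6(\omega_{B_{N^r}})} \lesssim_\epsilon N^{2\epsilon} N^{r/3} \|a\|_{\ell^2}$. The argument is shift-invariant: replacing $a_n$ by the modulated coefficients $\tilde{a}_n := a_n e(y\cdot\gamma(t_n))$, whose $\ell^2$ norm is unchanged, gives $\|F\|_{L^6(\omega_{B_{N^r}(y)})} \lesssim_\epsilon N^{2\epsilon} N^{r/3}\|a\|_{\ell^2}$ for every $y\in\R^2$. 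To pass from $B_{N^r}$ to $B_R$ with $R \ge N^r$, I would invoke the routine pointwise estimate
\[
\omega_{B_R}(x) \lesssim N^{-2r} \int_{\R^2} \omega_{B_R}(y)\, \omega_{B_{N^r}(y)}(x)\,dy,
\]
valid because $\omega_B(x) = (1+\|x\|)^{-200}$ decays like a Schwartz function. Fubini then produces
\[
\int |F|^6 \omega_{B_R}\,dx \lesssim N^{-2r}\left(N^{2\epsilon} N^{r/3}\|a\|_{\ell^2}\right)^6 \int \omega_{B_R}\,dy \lesssim N^{12\epsilon} R^2 \|a\|_{\ell^2}^6,
\]
using $\int \omega_{B_R} \sim R^2$. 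Dividing by $R^2$, taking sixth roots, and relabeling the exponent yields the theorem.

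No substantive obstacle arises: the pointwise convergence $E^S_{[0,1]} g_\eta \to F$ is immediate from Lipschitz continuity of $\gamma$, uniformity of the $L^\infty$ bound in $\eta$ is automatic from $\|\psi_\eta\|_{L^1} = 1$, and the convolution-type weight estimate is a standard fact about $\omega_B$. The proof essentially amounts to packaging these ingredients together with Corollary \ref{general}.
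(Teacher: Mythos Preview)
Your proof is correct and follows essentially the same approach as the paper: approximate the exponential sum by $E^S_{[0,1]}$ applied to an approximate identity concentrated at the points $t_n$, apply Corollary \ref{general} at scale $\delta=N^{-2}$, bound each piece by the trivial $L^\infty$ estimate, and pass to the limit. The paper uses $g_\tau=\tfrac{1}{2\tau}\sum_n a_n\chi_{[t_n-\tau,t_n+\tau]}$ in place of your smooth bumps, and transfers from $B_{N^r}$ to $B_R$ by covering $B_R$ with finitely overlapping $N^r$-balls and using $\sum \omega_{B_{N^r}}\lesssim \omega_{B_R}$, which is the discrete analogue of your convolution weight inequality; these are cosmetic differences.

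One minor technical point: your requirement $\supp\psi_\eta(\cdot-t_n)\subset J_n$ can fail when $t_n=n/N$ lies on the boundary of $J_n$, but this is easily remedied (e.g.\ by a one-sided bump, or by shifting the partition so that every $t_n$ is interior to its cell), and does not affect the argument.
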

\begin{proof}
The proof is along the same line of proof of Theorem~{4.1} in \cite{bourgain2016proof}. For completeness we give the details of the proof. Let  $\{B_{N^r}\}$ be a collection of finitely overlapping balls covering $B_R$. We apply Corollary~\ref{general} to each $B_{N^r}$ and use the fact that
$$
\sum w_{B_{N^r}}\leq C w_{B_R}
$$
to get
$$
\|E_{[0,1]}g\|_{L^6(w_{B_R})}\lesssim_\epsilon N^{\epsilon}\left(\sum_{\Delta\in \ptt_{N^{-1}}([0,1])}\|E_\Delta  g\|^2_{L^6(w_{B_R})}\right)^{1/2}.
$$
For $\tau>0$ let $g_\tau(t)=\frac{1}{2\tau}\sum_{n=1}^Na_n\chi_{[t_n-\tau,t_n+\tau]}(t)$ and apply the above inequality and then let $\tau$ to $0$. This completes the proof.	
\end{proof}

\bibliographystyle{plain}
\bibliography{decoupling}

\end{document}